\newtheorem{theorem}{Theorem}
\newtheorem{proposition}[theorem]{Proposition}
\newtheorem{lemma}[theorem]{Lemma}
\newtheorem{remark}[theorem]{Remark}
\newcommand{\CP}{\mathbb{CP}}
\newcommand{\RR}{\mathbb{R}}
\newcommand{\ZZ}{\mathbb{Z}}
\newcommand{\U}{{\rm{U}}}
\newcommand{\LB}{{\rm{LB}}}
\renewcommand{\H}{\mathcal{H}}
\newcommand{\w}{\omega}
\renewcommand{\i}{i}
\newcommand{\set}{\,|\,}
\newcommand{\proofend}{\hfill$\square$}
\numberwithin{equation}{section}
\numberwithin{theorem}{section}
\begin{document}
\bibliographystyle{alpha} 
\title[Toric LeBrun metrics]{Toric LeBrun metrics
and Joyce metrics
}
\author{Nobuhiro Honda}
\address{Mathematical Institute, Tohoku University,
Sendai, Miyagi, Japan}
\email{honda@math.tohoku.ac.jp}\author{Jeff Viaclovsky}
\address{Department of Mathematics, University of Wisconsin, Madison, 
WI, 53706}
\email{jeffv@math.wisc.edu}
\thanks{The first author has been partially supported by the Grant-in-Aid for Young Scientists  (B), The Ministry of Education, Culture, Sports, Science and Technology, Japan. The second author 
has been partially supported by the National Science Foundation under 
grant DMS-1105187.
\\
{\it{Mathematics Subject Classification}} (2010) 53A30}
\begin{abstract}
We show that, on the connected sum of complex projective planes, 
any toric LeBrun metric can be identified with 
a Joyce metric
admitting a semi-free circle action through
an explicit conformal equivalence.
A crucial ingredient of the proof is an 
explicit connection form for toric LeBrun metrics. 
\end{abstract}
\date{August 10, 2012}
\maketitle
%\setcounter{tocdepth}{1}
%\vspace{-6mm}

%%%%%%%%%%%%%%%%%%%%%%%%%%%%%%%%%%%%%%%%%%%%%%%%
\section{Introduction}
%%%%%%%%%%%%%%%%%%%%%%%%%%%%%%%%%%%%%%%%%%%%%%%%

  The subject of self-dual metrics on four-manifolds 
has rapidly developed since the discovery by Poon of
a $1$-parameter family of self-dual conformal classes on 
$\CP^2 \# \CP^2$ \cite{Poon1986}. We do not attempt to give a complete 
review of subsequent developments here; in this short note we are 
concerned only with two classes of self-dual metrics
on $n \# \CP^2$. 

First, in 1991, Claude LeBrun \cite{LeBrun1991} produced explicit examples 
with ${\rm{U}}(1)$-symmetry on $n \# \CP^2$, using a hyperbolic ansatz 
inspired by the Gibbons-Hawking ansatz \cite{GibbonsHawking}.   
LeBrun's construction depends on the choice of $n$ points 
in hyperbolic 3-space $\mathcal{H}^3$. For $n = 2$, the only 
invariant of the configuration is the distance 
between the monopole points, and LeBrun conformal classes 
are the same as the $1$-parameter family found by Poon. 
The Poon metrics are toric, that is, they admit a 
smooth effective action by a real torus ${\rm{U}}(1) \times {\rm{U}}(1)$.
For $n > 2$, a LeBrun metric admits a torus action if and 
only if the monopole points belong to a common hyperbolic 
geodesic. These form a sub-class of LeBrun metrics, 
which we call {\em{toric LeBrun metrics}}. 

The second class of metrics we are concerned with are the 
metrics on $n \# \CP^2$ discovered by Dominic Joyce in 
\cite{Joyce1995}. Joyce's construction depends on the choice of 
$n + 2$ points on the {\em{boundary}} of hyperbolic $2$-space.
These metrics are always toric. It was subsequently 
shown by Fujiki that any compact toric self-dual four-manifold 
with non-zero Euler characteristic is necessarily diffeomorphic
to $n \# \CP^2$, and furthermore
the self-dual structure is of Joyce-type \cite{Fujiki2000}.

We recall that a {\em{semi-free}} action is a non-trivial action 
of a group $G$ on a connected space $M$ such that for every $x \in M$, the 
corresponding isotropy subgroup is either all of $G$ or is trivial. 
Many of the families of metrics constructed by Joyce are not 
of LeBrun-type. However,  if the torus action contains a circle subgroup
which acts semi-freely on $n \# \CP^2$,  they 
are the same (for each $n$, such a torus action is unique).
This coincidence was stated in \cite{Joyce1995}, but without 
proof. This fact follows from Fujiki's theorem mentioned 
above, however we feel it is useful to have a direct proof. 
Recently, the authors determined the conformal automorphism 
groups of LeBrun's monopole metrics \cite{HondaViaclovsky}. In the 
course of that work an explicit connection for any toric
LeBrun metric was found, which we use in this paper 
to prove the following:
\begin{theorem}
\label{L=J}
On $n \# \CP^2$, the class of toric LeBrun metrics and the class of 
Joyce metrics admitting a semi-free circle action are the same,
and any metric of the first class can be identified 
with a metric of the second class through an explicit 
conformal equivalence. 
\end{theorem}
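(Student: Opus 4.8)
\smallskip
\noindent\textbf{Proof plan.} The plan is to write both families explicitly in coordinates adapted to the torus action and to exhibit the conformal equivalence directly. For a toric LeBrun metric we may place the monopole points $p_1,\dots,p_n$ on the $z$-axis of the upper half-space model of $\mathcal{H}^3$; then, besides the Gibbons--Hawking-type fiber field $\partial_\theta$, there is a rotational Killing field $\partial_\phi$, and, using the explicit connection form of \cite{HondaViaclovsky}, the metric takes the form
\[
g_{LB}=\frac{1}{z^2}\Big(V\,\bigl(dr^2+dz^2+r^2\,d\phi^2\bigr)+V^{-1}\bigl(d\theta+A\bigr)^2\Big),
\]
where $r$ is the Euclidean distance to the axis, $V$ the explicit hyperbolic monopole potential, and $A=A(r,z)\,d\phi$ the explicit rotationally invariant connection $1$-form. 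On the other side one recalls Joyce's ansatz over a region of the upper half-plane model of $\mathcal{H}^2$, with coordinates $(\rho,\eta)$ and torus angles $(\varphi_0,\varphi_\infty)$, an explicit expression in the two functions on $\mathcal{H}^2$ determined by Joyce's $n+2$ boundary points. The first task is to match these data.

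\smallskip
\noindent\textbf{Matching the quotient data.} Passing to the $T^2$-quotient, a $2$-disk whose boundary circle carries $n+2$ vertices with isotropy data in $\ZZ^2$, yields the dictionary. For the toric LeBrun metric the $n+2$ vertices are the $n$ monopole points together with the two ideal endpoints of the geodesic; the $n+1$ open geodesic segments are the arcs over which $\partial_\phi$ degenerates, while the remaining arc is the image of the conformal infinity $\partial\mathcal{H}^3\cong S^2$ modulo the rotation, over which $\partial_\theta$ degenerates. Crossing a monopole, the nontriviality of the $\partial_\theta$-bundle there forces the local $\ZZ^2$-vector of the $\partial_\phi$-circle to change by a unit, producing, up to change of basis, the ``LeBrun-type'' sequence $(1,0),(1,1),\dots,(1,n),(0,1)$ of isotropy vectors; conversely, this sequence is exactly the condition for the circle generated by the exceptional vector $(0,1)$ --- which one identifies with $\partial_\theta$ --- to act semi-freely on $n\#\CP^2$. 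This identifies the two families set-theoretically: $n$ points on a geodesic of $\mathcal{H}^3$ modulo the stabilizer of the geodesic correspond to $n+2$ points on $\partial\mathcal{H}^2$ modulo $PSL(2,\RR)$ (two of them being the ideal endpoints), both $(n-1)$-parameter families.

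\smallskip
\noindent\textbf{The conformal equivalence.} With the dictionary fixed one builds the diffeomorphism $\Phi$: identify the two tori compatibly with the isotropy data, and identify $(r,z)$ with $(\rho,\eta)$ through an explicit, essentially conformal change of variables carrying the monopoles and ideal endpoints to the prescribed boundary points. Pulling back Joyce's metric and comparing with $g_{LB}$ term by term then yields an explicit $T^2$-invariant function $\lambda=\lambda(r,z)$ --- schematically a power of $z$ times a factor built from $V$ and the Jacobian of the substitution --- with $\Phi^{*}g_J=\lambda\,g_{LB}$. Here the explicit formula for $A$ is precisely what makes the $d\theta$ cross-terms of the two metrics agree after the substitution, i.e.\ what matches the two circle bundles with connection. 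For the reverse inclusion, any Joyce metric admitting a semi-free circle action has, by the combinatorial discussion, LeBrun-type isotropy data, so running the coordinate change backwards exhibits it as a toric LeBrun metric; and since the semi-free circle, hence the whole combinatorial type, is determined by $n$, such a torus action is unique.

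\smallskip
\noindent\textbf{Main obstacle.} I expect the real work to be this last step: verifying, with the explicit connection form in hand, that after the conformal rescaling and the coordinate change the LeBrun metric is literally a Joyce metric. This amounts to showing that the explicit hyperbolic monopole potential $V$ and connection $A$ along a geodesic reassemble into Joyce's two defining functions, and it requires careful bookkeeping of orientations, the choice of $\ZZ^2$-basis, and normalizations so that the combinatorial data and the boundary configurations correspond exactly.
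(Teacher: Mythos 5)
Your strategy coincides with the paper's: use the explicit connection form from \cite{HondaViaclovsky}, pin down the stabilizer data forced by a semi-free circle subgroup, and then exhibit the conformal equivalence by an explicit change of coordinates on the two-dimensional quotient. The combinatorial half of your plan (semi-freeness forces the isotropy sequence $(0,1),(1,n),\dots,(1,1),(1,0)$ up to relabeling, and the two families have matching moduli) is essentially the paper's first proposition and is fine as a sketch. But the proposal stops exactly where the substantive work begins: you name the term-by-term verification as the ``main obstacle'' and do not carry it out, and you do not identify the coordinate change that makes it succeed. The paper uses the squaring map $x_1=r^2-z^2$, $x_2=2rz$, which sends the quarter-plane $\{r>0,\ z>0\}$ to the upper half-plane model of $\mathcal H^2$ and the monopole heights $c_\alpha$ to the boundary points $q_\alpha=-c_\alpha^2$, with the two remaining Joyce points normalized to $\infty$ and $0$; under it the potential and the connection coefficient become $V=1-\tfrac n2+\sum_\alpha (R-q_\alpha)/(2r_\alpha)$ and $f=-\tfrac n2+\sum_\alpha (R+q_\alpha)/(2r_\alpha)$, and the conformal factor is $2R(R-x_1)/(x_2^2z^2V)$. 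None of this is guessable from the schematic description ``a power of $z$ times a factor built from $V$ and the Jacobian.''

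The actual identification then hinges on the proportionality $a_1b_2-a_2b_1=-\frac{x_2}{2R}V$ (Lemma~\ref{lemma:proportional}), which says that the determinant of Joyce's matrix $\phi$ for the semi-free stabilizer data reproduces the LeBrun potential; this is what lets the three fiber-metric coefficients be compared, and the $g_{22}$ versus $\tilde g_{33}$ comparison still requires a nontrivial cancellation of double sums via $R^2+q_\alpha^2-2x_1q_\alpha=r_\alpha^2$. Until the coordinate change is specified and these identities are verified, the central claim --- that the two explicitly given families of metrics literally coincide --- remains unproved, so the proposal as written has a genuine gap precisely at the step you flag. A dimension count of the two moduli spaces does not substitute for this: agreement of parameter counts does not show that a given toric LeBrun conformal class equals the Joyce class with the corresponding boundary configuration.
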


%%%%%%%%%%%%%%%%%%%%
%\subsection{Acknowledgements}

%%%%%%%%%%%%%%%%%%%%%%%%%%%%%%%%%%%%%%%%%%%%%%%%
%%%%%%%%%%%%%%%%%%%%%%%%%%%%%%%%%%%%%%%%%%%%%%%%
\section{An explicit global connection}
\label{explicit}
%%%%%%%%%%%%%%%%%%%%%%%%%%%%%%%%%%%%%%%%%%%%%%%%
%%%%%%%%%%%%%%%%%%%%%%%%%%%%%%%%%%%%%%%%%%%%%%%%
First we quickly recall the construction of LeBrun's self-dual hyperbolic monopole metrics.
Let $\mathcal H^3=\{(x,y,z)\set z>0\}$ be equipped with the usual hyperbolic metric
$g_{\mathcal H^3}:=(dx^2+dy^2+dz^2)/z^2$.
Let $n$ be any non-negative integer 
and  $P = \{p_1, \dots,  p_n\}$ be distinct points in $\H^3$.
Let $\Gamma_{p_{\alpha}}$ be the fundamental solution for the hyperbolic 
Laplacian based at $p_{\alpha}$ with normalization 
$\Delta \Gamma_{p_{\alpha}} = -2 \pi \delta_{p_{\alpha}}$,
and define
\begin{align}\label{V1}
V = 1 + \sum_{{\alpha} = 1}^n  \Gamma_{p_{\alpha}}.
\end{align}
Then $* dV$ is a closed $2$-form on $\H^3 \setminus P$,
and $[* dV]/2\pi$ belongs to an integral class  $
H^2 ( \H^3 \setminus P, \ZZ )$.
Let $\pi: X_0 \rightarrow \H^3 \setminus P$ 
be the unique principal $\U(1)$-bundle determined by this 
 integral class.
By Chern-Weil theory, there is a connection form $\w \in H^1(X_0, \i \RR)$
such that $d\w=\i (* dV)$. Then LeBrun's metric is defined by 
\begin{align}
\label{LBmetric1}
g_{\LB} = z^2 (  V \cdot g_{\H^3} - \frac1V \, \w \odot \w).
\end{align}
This is anti-self-dual with respect to a K\"ahler
orientation of $X_0$.
By attaching points $\tilde p_{\alpha}$ over each $p_{\alpha}$,
we obtain a complete, K\"ahler scalar-flat 
(and therefore anti-self-dual) ALE manifold,
which can be conformally compactified by adding a point 
at infinity, yielding a self-dual conformal class on $n\#\CP^2$.
The $\U(1)$-action of the principal $\U(1)$-bundle 
naturally extends to $n\#\CP^2$,
and the resulting $\U(1)$-action on $n\#\CP^2$ is semi-free.
See \cite{LeBrun1991,LeBrun1993} for detail.

From the construction LeBrun metrics always admit a
$\U(1)$-action, and they admit an effective $\U(1) \times  \U(1)$-action if and only if 
all the $n$ points belong to a common geodesic.
We call these latter metrics {\em{toric LeBrun metrics}}.
By applying a hyperbolic isometry, without loss of generality we may assume 
that the geodesic is the $z$-axis,
and we let $p_{\alpha}=(0,0,c_{\alpha})$ with 
 $0<c_1<c_2<\cdots<c_n$.
We also define $c_0=0$ and $c_{n+1} = \infty$.

For toric LeBrun metrics, we shall explicitly write down the
connection form $\omega$  on the $\U(1)$-bundle $\pi:X_0\to \mathcal H^3$.
For this, we first let $U = \H^3 \setminus \{z\mbox{-axis}\}$ and 
take cylindrical coordinates on $U$ as
\begin{align}
U = \{ (x,y,z) = ( r \cos \tau, r \sin \tau, z ) \set z > 0, \ 0 \leq \tau < 2 \pi \}
\end{align}
(we use $\tau$ for the angular coordinate here, since $\theta$ will 
be used below as the angular coordinate on the circle bundle).
Also on the $z$-axis we define an interval 
\begin{align}
I_{\alpha}:=\{(0,0,z)\set c_{\alpha-1}<z<c_{\alpha}\}, \ \ \ 1\le {\alpha}\le n+1,
\end{align}
and we let $U_{\alpha}:= U\cup I_{\alpha}$ for each $1\le \alpha\le n+1$.
Then we obtain an open covering 
\begin{align}\label{covering}
\H^3 \setminus \{p_1, p_2, \ldots, p_n \} = U_1 \cup U_2 \cup \ldots
\cup U_{n+1}.
\end{align}
Finally, for any positive real number $c$, we define a function $f_c$ by
\begin{align}
f_c(r,z) = \frac{r^2 + z^2 - c^2}{2 \sqrt{ (c^2 + r^2 + z^2)^2 - 4 c^2 z^2}} - \frac 12.
\end{align}
We note that $(c^2 + r^2 + z^2)^2 - 4 c^2 z^2 \geq 0$ and is zero only at $(0,0,c)$.
Therefore, $f_{c}$ is a function defined on  all of $\H^3 \setminus \{(0,0,c)\}$.

\begin{theorem}
\label{conn}
Using the above notation, 
define a function on $\H^3 \setminus \{p_1, p_2, \ldots, p_n \}$ by
 $f : = f_{c_1} + f_{c_2} + \cdots + f_{c_n} $. Then $f$ satisfies
\begin{align}
d (f d \tau ) = * d V, 
\end{align}
in $U$.  That is, the 1-form $ i f d \tau$
is a local connection form in $U$.
Next for each $\alpha$ with $1\le \alpha \le n+1$, the 1-form 
\begin{align}
\omega_{\alpha} &= \i ( f + n + 1 - \alpha) d \tau, \label{conn001}
\end{align}
is well-defined on $U_\alpha$. 
Together, these $1$-forms define a global connection form (with values
in $\mathfrak{u}(1) = \i \RR$) on the total space $X_0 \rightarrow M$. 
That is, there is a global connection $\omega$ on $X_0$,
such that $\omega = \omega_{\alpha} +   \i\cdot d \theta$
over $U_{\alpha}$, 
where $\theta$ is an angular coordinate on the fiber.
\end{theorem}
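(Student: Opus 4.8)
The plan is to verify the three assertions of the theorem in turn.

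\emph{Step 1 (the curvature identity $d(f\,d\tau) = {*}dV$ on $U$).} Because $*$ and $d$ are linear and $V = 1 + \sum_{\alpha}\Gamma_{p_\alpha}$, $f = \sum_{\alpha} f_{c_\alpha}$, it is enough to prove $d(f_c\,d\tau) = {*}d\Gamma_{(0,0,c)}$ for a single monopole point $(0,0,c)$ on the $z$-axis; and since the hyperbolic dilation $(x,y,z)\mapsto\lambda(x,y,z)$ fixes $d\tau$ and carries $f_c,\Gamma_{(0,0,c)}$ to $f_{c/\lambda},\Gamma_{(0,0,c/\lambda)}$, I may take $c=1$. In the orthonormal coframe $(z\inv dr,\, z\inv r\,d\tau,\, z\inv dz)$ of $g_{\H^3}$ one computes ${*}\,dh = z\inv r\,(h_r\,d\tau\wedge dz + h_z\,dr\wedge d\tau)$ for any $h = h(r,z)$, so the identity becomes the first-order system $\partial_r f_1 = z\inv r\,\partial_z\Gamma_{(0,0,1)}$, $\partial_z f_1 = -z\inv r\,\partial_r\Gamma_{(0,0,1)}$. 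Substituting the explicit fundamental solution $\Gamma_{(0,0,1)} = \tfrac12(\coth\rho - 1) = \frac{r^2+z^2+1}{2\sqrt{(1+r^2+z^2)^2-4z^2}} - \tfrac12$ (with $\cosh\rho = (r^2+z^2+1)/(2z)$), which differs from $f_1$ only by the sign of the constant in its numerator, turns this into a direct differentiation. This is the lengthiest computation in the proof, but it is mechanical; the existence of \emph{some} primitive is in any case automatic, since ${*}dV$ is closed on $U$ (as $\Delta V = 0$ there) and $H^2(U;\RR) = 0$.

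\emph{Step 2 ($\omega_\alpha$ extends across $I_\alpha$).} The expression $(c^2+r^2+z^2)^2 - 4c^2z^2$ is a polynomial in $r^2$ and $z^2$ that equals $(c^2-z^2)^2$ along $r=0$, so near any point $(0,0,z_0)$ of the $z$-axis with $z_0\neq c$ it has a smooth positive square root and $f_c$ is there a smooth function of $(r^2,z)$. Setting $r=0$ gives $f_c(0,z) = 0$ for $z>c$ and $f_c(0,z) = -1$ for $z<c$; hence on $I_\alpha$ (where $c_{\alpha-1}<z<c_\alpha$) exactly $n+1-\alpha$ of the summands, namely those $f_{c_\beta}$ with $\beta\geq\alpha$, contribute $-1$ and the rest contribute $0$, so $f\equiv\alpha-n-1$ on $I_\alpha$ and $f + n + 1 - \alpha$ vanishes there. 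A smooth function of $(r^2,z)$ vanishing at $r=0$ is $r^2$ times a smooth function, and $r^2\,d\tau = x\,dy - y\,dx$ extends smoothly across the $z$-axis; therefore $\omega_\alpha = \i(f + n+1-\alpha)\,d\tau$ does too. This is precisely what pins down the integer shifts in \eqref{conn001}.

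\emph{Step 3 (assembling the global connection).} Since $n+1-\alpha\in\ZZ$, the functions $t_{\alpha\beta} := e^{\i(\alpha-\beta)\tau}$ are well defined and smooth on $U_\alpha\cap U_\beta = U$ and satisfy the cocycle identity $t_{\alpha\beta}t_{\beta\gamma} = t_{\alpha\gamma}$, hence are the transition functions of a principal $\U(1)$-bundle $\widehat X\to\H^3\setminus\{p_1,\dots,p_n\}$, trivial over each $U_\alpha$. By \eqref{conn001} the difference $\omega_\beta - \omega_\alpha = \i(\alpha-\beta)\,d\tau$ equals the logarithmic derivative $t_{\alpha\beta}\inv\,dt_{\alpha\beta}$, which is exactly the compatibility a family of local connection forms must satisfy; so the $\i\RR$-valued $1$-forms $\omega_\alpha + \i\,d\theta$, read off in the product charts $\widehat X|_{U_\alpha}\cong U_\alpha\times\U(1)$, fit together into a global connection $\widehat\omega$ on $\widehat X$. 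Its curvature equals $d\omega_\alpha = \i\,d(f\,d\tau) = \i\,{*}dV$ on each $U_\alpha$ — valid on the dense set $U$ by Step 1 and on all of $U_\alpha$ by continuity, since ${*}dV$ is smooth there. The connection on $X_0$ furnished by Chern--Weil theory has the same curvature $\i\,{*}dV$; and $\H^3\setminus\{p_1,\dots,p_n\}$, being $\RR^3$ with $n$ points removed, is homotopy equivalent to a wedge of $n$ two-spheres, so $H^2(\,\cdot\,;\ZZ)$ is torsion-free and a $\U(1)$-bundle over it is determined by the image of its first Chern class in real cohomology. Hence $\widehat X\cong X_0$, and transporting $\widehat\omega$ across this isomorphism yields a global connection $\omega$ on $X_0$ with $\omega = \omega_\alpha + \i\,d\theta$ over $U_\alpha$; restricting to $U$ and re-trivializing by $\theta\mapsto\theta + (n+1-\alpha)\tau$ then displays $\i f\,d\tau$ as its local connection form on $U$.

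The real obstacle is the bookkeeping of Steps 2 and 3 — recognizing that the correction constants are forced to be $n+1-\alpha$ and that the resulting integer-valued transition functions reassemble $X_0$ itself — whereas Step 1, though the longest, is a routine verification once the Green's function is written down explicitly.
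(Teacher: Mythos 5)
Your proof is correct and follows essentially the same route as the paper: verify $d(f_c\,d\tau)={*}d\Gamma_{(0,0,c)}$ for a single monopole point, compute the boundary values $f_c(0,z)\in\{0,-1\}$ to pin down the integer shifts $n+1-\alpha$, and assemble the local forms into a global connection. The paper simply cites its earlier reference for the single-point identity and invokes ``basic connection theory'' for the assembly, whereas you supply those details yourself (the Hadamard-lemma argument for smoothness of $(f+n+1-\alpha)\,d\tau$ across the axis, and the identification of the reassembled bundle with $X_0$ via equality of curvatures plus torsion-freeness of $H^2(\H^3\setminus P;\ZZ)$), leaving only the final differentiation in Step 1 as an asserted-but-routine computation --- the same computation the paper also omits.
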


\begin{proof} This was proved in \cite[Theorem 3.1]{HondaViaclovsky} for the 
case of two monopole points. It is straightforward to generalize the argument 
to the case of $n$ monopole points, so we only provide a brief sketch here. 
The Green's function is given by
\begin{align}
\Gamma_{(0,0,c)} (x,y,z) 
& =  - \frac{1}{2} + \frac{1}{2}
\left[
1 - \frac{ 4 c^2 z^2}{\left( r^2 + z^2 + c^2 \right)^2}
\right]^{-1/2},
\end{align}
where $r^2 = x^2 + y^2$, see \cite[Section 2]{LeBrun1991b}.
In the proof of \cite[Theorem 3.1]{HondaViaclovsky},
it is shown that
\begin{align} 
d ( f_{c_{\alpha}} d \tau) = *d ( \Gamma_{p_{\alpha}}).
\end{align}
It is easy to see that
\begin{align}
f_{c_{\alpha}}(0,z) = 
\begin{cases}
-1 &  z < c_{\alpha}\\
0 &  z > c_{\alpha}.\\
\end{cases}
\end{align}
From these it follows that the sum $f : = f_{c_1} + f_{c_2} + \cdots + f_{c_n}$ 
then satisfies 
\begin{align}
d (f d \tau ) = * d V, 
\end{align}
and
\begin{align}
f(0,z) = \alpha - n - 1, \hspace{3mm} z \in I_{\alpha}.
\end{align}
Consequently, the form 
\begin{align}
\omega_{\alpha} = \i( f + n + 1 - \alpha) d \tau,
\end{align}
extends smoothly to $U_{\alpha}$. It then follows from basic connection theory 
that the $\omega_{\alpha}$ are the local representatives of a globally 
defined connection. 
\end{proof}
Although we do not require this in the proof of our main theorem, 
we remark that one can use the above local connection forms to write down the 
transitions functions of the $\U(1)$-bundle explicitly:
\begin{proposition}
\label{trans}
With respect to the open covering \eqref{covering}, the transition 
functions of the $\U(1)$-bundle $\pi:X_0\to \mathcal H^3 \setminus \{p_1, p_2, \cdots, p_{n}\}$ are given by $g_{\alpha\beta} = e^{ \i (\beta - \alpha)\tau}$.
\end{proposition}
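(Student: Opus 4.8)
The plan is to read the transition functions off directly from the explicit local connection forms of Theorem~\ref{conn}, using the transformation law for connection one-forms on a principal $\U(1)$-bundle. For each $\alpha$ fix the trivialization of $X_0$ over $U_\alpha$ in which $\omega = \omega_\alpha + \i\cdot d\theta$ as in Theorem~\ref{conn}, and write $\theta_\alpha$ for its angular fiber coordinate, to emphasize the dependence on $\alpha$. With the convention that on an overlap $U_\alpha\cap U_\beta$ the fiber coordinates transform by $e^{\i\theta_\beta} = g_{\alpha\beta}\cdot e^{\i\theta_\alpha}$, one has $\i\, d\theta_\beta = \i\, d\theta_\alpha + g_{\alpha\beta}\inv dg_{\alpha\beta}$. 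Equating the two expressions $\omega_\alpha + \i\, d\theta_\alpha = \omega = \omega_\beta + \i\, d\theta_\beta$ for the global connection over $U_\alpha\cap U_\beta$ and inserting \eqref{conn001} then gives
\begin{align}
g_{\alpha\beta}\inv dg_{\alpha\beta} = \omega_\alpha - \omega_\beta = \i(\beta-\alpha)\, d\tau,
\end{align}
the term involving $f$ having cancelled.

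Next I would solve this equation. For $\alpha \neq \beta$ the overlap $U_\alpha\cap U_\beta$ equals $U = \H^3\setminus\{z\mbox{-axis}\}$, which is connected and on which $e^{\i\tau}$ is a single-valued $\U(1)$-valued function; hence $e^{\i(\beta-\alpha)\tau}$ is a solution, and, since $\U(1)$ is abelian, the quotient of any two solutions is a function on the connected set $U$ with vanishing differential, i.e.\ a constant in $\U(1)$. Therefore $g_{\alpha\beta} = c_{\alpha\beta}\, e^{\i(\beta-\alpha)\tau}$ for constants $c_{\alpha\beta}\in\U(1)$.

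The last step is to normalize the constants away. The collection $(c_{\alpha\beta})$ is a \v{C}ech $1$-cocycle for the covering \eqref{covering} with values in the constant sheaf $\U(1)$; since every nonempty intersection of members of that covering is the connected set $U$, the nerve of the covering is a simplex, and such a cocycle is automatically a coboundary, say $c_{\alpha\beta} = d_\beta\, d_\alpha\inv$ with $d_\alpha\in\U(1)$. Multiplying the trivialization over each $U_\alpha$ by the constant $d_\alpha\inv$ leaves every $\omega_\alpha$ unchanged (a constant contributes nothing to $d\theta$) while replacing $g_{\alpha\beta}$ by $e^{\i(\beta-\alpha)\tau}$, which is the assertion. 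The only point demanding any care is precisely this normalization — verifying that the residual constant ambiguity is a coboundary and that removing it is compatible with the normalization of the $\omega_\alpha$ already fixed in Theorem~\ref{conn}; the rest is the one-line computation above.
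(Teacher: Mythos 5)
Your proposal is correct and follows essentially the same route as the paper: both read off $g_{\alpha\beta}^{-1}dg_{\alpha\beta}=\omega_\alpha-\omega_\beta=\i(\beta-\alpha)\,d\tau$ from the transformation law for local connection forms and integrate. The only difference is that you carefully dispose of the residual constant ambiguity via a \v{C}ech coboundary argument, a point the paper's proof passes over silently.
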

\begin{proof}
From above, we have that
\begin{align}
\omega_{\beta} - \omega_{\alpha} 
= \i ( f + n + 1 - \beta) d \tau - \i (f + 1 + n - \alpha) d \tau
= \i ( \alpha - \beta) d \tau. 
\end{align}
The formula for the change of connection is given by 
\begin{align}
\omega_\beta - \omega_\alpha = g_{\beta \alpha}^{-1} d g_{\beta \alpha},
\end{align}
which implies that $g_{\beta \alpha} = e^{\i ( \alpha - \beta)\tau}$,
or equivalently,  $g_{\alpha \beta} = e^{\i ( \beta - \alpha)\tau}$, 
\end{proof}

%%%%%%%%%%%%%%%%%%%%%%%%%%%%%%%%%%%
\section{Explicit identification with Joyce metrics}

In this section, we use the explicit connection forms from Section~\ref{explicit}
to prove Theorem~\ref{L=J}. 
As in Section~\ref{explicit}, $(r,\tau,z)$ denotes cylindrical coordinates
on $U = \H^3 \setminus \{z\mbox{-axis}\}$.
We introduce another coordinate system $(x_1,x_2)$ by setting
\begin{align}\label{coordchange2}
x_1 = r^2 - z^2,\quad
x_2 = 2 r z.
\end{align}
The map $(r,z)\mapsto (x_1,x_2)$ is a diffeomorphism from the quarter plane 
$\{(r,z)\set r>0,\,z>0\}$ to the upper half plane $\{(x_1,x_2)\set x_2>0\}$.
(Thus we adapt the upper-half plane model, rather than
 the right-half
plane model used in \cite{Joyce1995}.)
The point $(r,z)=(0,c_{\alpha})$ (on the boundary of $\{r>0,\,z>0\}$) 
determined from the monopole point $p_{\alpha}$, is mapped to the point 
$(x_1,x_2)=(-c_{\alpha}^2,0)$ (on the boundary of $\{x_2>0\}$).
In order to save space, for each integer $\alpha$ with
 $3\le \alpha\le n+2$, we put
\begin{align}
q_{\alpha}:=-c_{{\alpha}-2}^2,\quad
r_{\alpha}:=\sqrt{(x_1-q_{\alpha})^2+x_2^2},\quad
R:=\sqrt{x_1^2+x_2^2}
\end{align}
(we adopt this un-natural numbering for a later purpose).
Then we have $0>q_3>q_4>\cdots>q_{n+2}$, and also
\begin{align}
r^2 = \frac{1}{2} \left( R + x_1 \right),\quad
z^2 =  \frac{1}{2} \left( R - x_1 \right).
\end{align}
and
\begin{align}
dx_1^2 + dx_2^2 = 4(r^2 + z^2) ( dr^2 + dz^2) = 4 R \, ( dr^2 + dz^2).
\end{align}

Under the coordinates $(r,z,\tau,\theta)$, the  
metric $g_{\LB}$ multiplied by a conformal factor 
$(z^2V)^{-1}$
can be written as
\begin{align}
\begin{split}
\label{LBmetric2}
\frac{g_{\LB}}{z^2 V} &=  \frac{dr^2 + r^2 d \tau^2 + dz^2}{z^2}  + \frac 1{V^2} \, (d\theta+fd\tau)^2\\
&= \frac{ dr^2 + dz^2}{z^2}  + \frac{r^2}{z^2} d \tau^2
+ \frac1{V^2}\, ( d \theta + f d \tau )^2,
\end{split}
\end{align}
and noting $q_{\alpha}<0$ the functions $V$ and $f$ can be computed, in terms of the coordinates $(x_1,x_2)$, as
\begin{align}
V =
1-\frac n2+
\sum_{{\alpha}=3}^{n+2}\frac{R-q_{\alpha}}{2r_{\alpha}}, \quad
f(x_1,x_2) = - \frac n2 + \sum_{{\alpha}=3}^{n+2}
\frac{R+q_{\alpha}}{2r_{\alpha}}.
\label{f_and_V}
\end{align}
Hence, writing $g_{\mathcal H^2}:=(dx_1^2+dx_2^2)/x_2^2$, we have
\begin{align}
\label{LB1}
\frac{g_{\LB}}{z^2 V} 
&=
\frac{
\frac{dx_1^2+dx_2^2}{4R}
}{\frac{1}{2} \left( R  - x_1\right)}+
\frac
{ R  + x_1 }
{ R  - x_1 }
d\tau^2
+
\frac{( d \theta + f d \tau )^2 }{V^2}
\\
&=
\frac{x_2^2}{2R(R-x_1)}
\left[
g_{\mathcal H^2}  + 
\frac{2R^2}{x_2^2}
\left\{  \left( 1  + \frac{x_1}{ R } \right) d \tau^2
+  \left( 1  - \frac{x_1}{ R } \right)  
 \frac{( d \theta + f d \tau ) ^2 } {V ^2 } \right\}
\right].\label{LBmetric3}
\end{align}
From \eqref{f_and_V}, this expresses a toric LeBrun metric 
in terms of the coordinates $(x_1,x_2,\tau,\theta)$.
In the following, for simplicity of notation, we denote by 
$\tilde g_{\LB}$ the quantity
in the brackets $[\quad]$ in \eqref{LBmetric3}; namely we define
\begin{align}\label{tildeLBmetric}
\tilde g_{\LB}
:=\frac{2R(R-x_1)}{x_2^2z^2V} \, g_{\LB}.
\end{align}

Next we explain the explicit form of Joyce metrics  
on $n\#\mathbb{CP}^2$ of arbitrary type, following \cite{Joyce1995}. 
Let $k:=n+2$, and $\mathfrak q_1>\mathfrak q_2>\cdots>\mathfrak q_{k}$ 
be the set of elements in $\mathbb R\cup\{\infty\}$
 involved in the construction of Joyce metrics
(\cite[Theorem 3.3.1]{Joyce1995},
where the letter $p_i$ was used instead of $\mathfrak q_{\alpha}$).
For each ${\alpha}$ with $1\le {\alpha}\le k$ and $\mathfrak q_{\alpha}\neq 0,\infty$, 
let $\rho_{\alpha}:=\{(x_1-\mathfrak q_{\alpha})^2+x_2^2\}^{1/2}$, and let $u^{(\mathfrak q_{\alpha})}$ be an $\mathbb R^2$-valued function  defined by
\begin{align}
u^{(\mathfrak q_{\alpha})}(x_1,x_2)
=
\begin{pmatrix}
u_1^{(\mathfrak q_{\alpha})}(x_1,x_2) \\
u_2^{(\mathfrak q_{\alpha})}(x_1,x_2)
\end{pmatrix}
\quad\text{where}\quad
u_1^{(\mathfrak q_{\alpha})} = \frac{x_2}{\rho_{\alpha}},
\quad
u_2^{(\mathfrak q_{\alpha})} = \frac{x_1-\mathfrak q_{\alpha}}{\rho_{\alpha}}
\end{align}
(in \cite{Joyce1995} the notation $f^{(p_i)}$ is used instead of 
$u^{(\mathfrak q_{\alpha})}$).
When $\mathfrak q_{\alpha}=\infty$ or $\mathfrak q_{\alpha}=0$, we let
\begin{align}
u^{(\infty)}=
\begin{pmatrix}
0\\-1
\end{pmatrix},
\quad
u^{(0)}=
\begin{pmatrix}
x_2/R\\x_1/R
\end{pmatrix}.
\end{align}
Let $\{(m_{\alpha},n_{\alpha})\set 1\le {\alpha}\le k\}$ be the set of  pairs of coprime integers determined from the $\U(1) \times \U(1)$-action on $n\#\CP^2$ we are considering (namely, the stabilizer data).
Without loss of generality, we can always suppose that $m_{\alpha}n_{\alpha+1}-m_{\alpha+1}n_{\alpha}=-1$ for ${\alpha}$ with $1\le {\alpha}<k$, $(m_1,n_1)=(0,1)$ and $(m_{k}, n_{k})=(1,0)$, and also $m_{\alpha}>0,n_{\alpha}>0$ for any $ {\alpha}$ with $1<{\alpha}<k$. After this normalization, 
 we let
\begin{align}
\phi = 
\sum_{{\alpha}=1}^{k-1}  \frac{u^{(\mathfrak q_{\alpha})}-u^{(\mathfrak q_{{\alpha}+1})}}{2}
\otimes (m_{\alpha},n_{\alpha})\,
+  \frac{u^{(\mathfrak q_{k})}+u^{(\mathfrak q_{1})}}{2}
\otimes (m_{k},n_{k}).
\end{align}
If we write this as
$$
\phi=
\begin{pmatrix}
a_1 (x_1, x_2) &  b_1 (x_1, x_2) \\
a_2 (x_1, x_2) &  b_2 (x_1, x_2)
\end{pmatrix},
$$
then on the dense open subset $\mathcal H^2\times \U(1) \times \U(1)$ of $n\#\CP^2$ the Joyce metric  with the given $\U(1) \times \U(1)$-action is expressed as
\begin{align}\label{Joyce1}
g_J=g_{\mathcal H^2} + 
\frac{\left(a_1^2+a_2^2
\right)dy_1^2
+
\left(b_1^2+b_2^2
\right)dy_2^2
-2
\left(
a_1b_1+a_2b_2
\right)
dy_1 dy_2}{ (a_1b_2-a_2b_1 )^2 },
\end{align}
where $y_1,y_2$ are coordinates with period $2\pi$ on $\U(1) \times \U(1)$.

\begin{proposition}
If a Joyce metric has a  $\U(1)$-subgroup
of $\U(1)\times \U(1)$ acting semi-freely on $n\#\CP^2$,
then the stabilizer data can be supposed to be 
\begin{align}\label{stab0}
(m_1,n_1)=(0,1),\quad\text{and}\quad
(m_{\alpha},n_{\alpha})=(1,k-{\alpha})\quad\text{for}\quad2\le {\alpha}\le k.
\end{align}
\end{proposition}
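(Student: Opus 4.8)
The plan is to reduce the statement to a lattice-theoretic assertion about the sequence $(m_1,n_1),\dots,(m_k,n_k)$, via a criterion for semi-freeness in terms of the stabilizer vectors, and then to run a short case analysis on the direction of the semi-free circle.

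First I would record the semi-freeness criterion. A circle subgroup $C\subset\U(1)\times\U(1)$ is cut out by a primitive vector $w=(p,q)\in\ZZ^2$; on the $\alpha$-th invariant two-sphere the generic $\U(1)\times\U(1)$-stabilizer is the circle through $(m_\alpha,n_\alpha)$, so the $C$-stabilizer of a generic point of that sphere is cyclic of order $|pn_\alpha-qm_\alpha|$ when $w$ is not parallel to $(m_\alpha,n_\alpha)$ and is all of $C$ when it is; at the $\U(1)\times\U(1)$-fixed points the $C$-stabilizer is $C$, and at all remaining points it is trivial. Hence $C$ acts semi-freely on $n\#\CP^2$ if and only if, for every $\alpha$, either $w\parallel(m_\alpha,n_\alpha)$ or $|pn_\alpha-qm_\alpha|=1$.

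Next I would use the normalization $(m_1,n_1)=(0,1)$ and $(m_k,n_k)=(1,0)$. Evaluating the criterion at $\alpha=1$ gives $|p|\le1$ and at $\alpha=k$ gives $|q|\le1$, so a semi-free direction $w$ is, up to sign, one of $(0,1)$, $(1,0)$, $(1,1)$, $(1,-1)$. If $w=(1,-1)$ the criterion at any interior index would force $m_\alpha+n_\alpha=1$, impossible since $m_\alpha,n_\alpha>0$ there; so then $k=2$ and the data already has the form \eqref{stab0}. The case $w=(1,0)$ is converted into the case $w=(0,1)$ by reversing the cyclic order of the $\mathfrak q_\alpha$ and applying the lattice automorphism $(m,n)\mapsto(n,m)$, an operation which preserves both the class of Joyce metrics and the normalization conventions and which sends $(1,0)$ to $(0,1)$. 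So the decisive case is $w=(0,1)$, i.e. the semi-free direction is that of the first stabilizer vector. In that case, for $1<\alpha<k$ the vector $(m_\alpha,n_\alpha)$ lies in the open first quadrant and is therefore not parallel to $(0,1)$, so the criterion forces $m_\alpha=1$; the relation $m_\alpha n_{\alpha+1}-m_{\alpha+1}n_\alpha=-1$ then reads $n_{\alpha+1}=n_\alpha-1$ for $2\le\alpha\le k-1$ (the relation at $\alpha=1$ being automatic), and since $n_k=0$ this telescopes to $n_\alpha=k-\alpha$ for all $2\le\alpha\le k$, which is exactly \eqref{stab0}.

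The step I expect to be the main obstacle is the remaining direction $w=(1,1)$, which the symmetry above does not reach because $(1,1)$ is never one of the two boundary vectors. Here the criterion only says that each interior $(m_\alpha,n_\alpha)$ is either equal to $(1,1)$ or satisfies $|m_\alpha-n_\alpha|=1$, and one must show --- using in addition the restrictions that an admissible stabilizer sequence for $n\#\CP^2$ satisfies beyond the consecutive-determinant condition --- that any such chain is carried to \eqref{stab0} by a suitable further change of lattice basis and relabelling. Setting up this last reduction, and in particular separating the chains that are genuinely equivalent to \eqref{stab0} from the ones that are not admissible, is where I expect the real work to be; the rest is the bookkeeping described above.
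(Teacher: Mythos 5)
Your strategy coincides with the paper's: use the end spheres $S^2_1$ and $S^2_k$ to reduce the candidate semi-free circles to $G(0,1)$, $G(1,0)$, $G(1,\pm1)$, dispose of $G(1,-1)$ by positivity of the interior data and of $G(1,0)$ by the swap-and-reverse symmetry, and in the $G(0,1)$ case telescope the determinant relation to get \eqref{stab0}. Those parts are correct (and you are in fact more careful than the paper in listing $(1,-1)$ at all). But the proposal does not prove the proposition: the case $w=(1,1)$ is explicitly deferred, and that case is neither vacuous nor routine. For instance, with $k=5$ the data $(0,1),(1,2),(1,1),(2,1),(1,0)$ satisfies every condition of the normalization \eqref{stab1}, is not of the form \eqref{stab0}, and yet $G(1,1)$ acts semi-freely on the corresponding $3\#\CP^2$, since $|m_\alpha-n_\alpha|\le 1$ for every $\alpha$. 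So you cannot hope to show the case is empty; you must show that every such chain is carried to \eqref{stab0} by a further relabelling and change of basis, and no argument for that is given. This is exactly where the content of the proposition lies. (The paper itself disposes of this case with the one-line claim that $G(1,1)$ cannot act semi-freely on the interior spheres, a claim the displayed example shows must at least be read with care.)

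One way to finish within your framework: first show that for $k\ge 3$ the normalization \eqref{stab1} forces some $v_\beta=(1,1)$ --- if $n_\alpha>m_\alpha$ and $m_{\alpha+1}>n_{\alpha+1}$ for a consecutive pair, then $m_\alpha n_{\alpha+1}-m_{\alpha+1}n_\alpha\le-(m_\alpha+n_{\alpha+1}+1)\le-2$ unless the pair is $((0,1),(1,0))$, so the chain must pass through the diagonal rather than jump across it. Then rotate the cyclic labelling of the invariant spheres so that $S^2_\beta$ becomes the first sphere and renormalize the $\ZZ$-basis of $\ZZ^2$ so that the conditions of \eqref{stab1} hold again; this operation preserves the class of admissible Joyce data and carries the semi-free circle $G(1,1)=G(v_\beta)$ to $G(0,1)$, after which your $(0,1)$ argument yields \eqref{stab0}. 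Until something of this kind is supplied, the proof is incomplete at its essential step.
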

\begin{proof}
For this, as in Proposition 3.1.1 of \cite{Joyce1995}, 
by choosing appropriate $\ZZ$-basis of $\mathbb Z^2$, we can always normalize the 
stabilizer data $\{(m_{\alpha},n_{\alpha})\}$ in a way that 
they satisfy 
\begin{align}\label{stab1}
(m_1,n_1)= (0,1), \, (m_k,n_k) = (1,0),\,\,
m_{\alpha}n_{\alpha+1} - m_{\alpha+1}n_{\alpha} = -1
{\text{ for }} 1\le \alpha< k.
\end{align}
The last condition in particular means that $(m_{\alpha},n_{\alpha})$ moves in the clockwise direction
as $\alpha$ increases.
Therefore $m_{\alpha}>0$ and $n_{\alpha}>0$ hold for $1<\alpha<k$.
As in \cite{Joyce1995} for mutually coprime integers $m$ and $n$ define 
a $\U(1)$-subgroup $G(m,n)$ of $\U(1) \times \U(1)$ by 
$$
G(m,n) = \{(e^{2\pi i \phi}, e^{2\pi i \psi} ) \set e^{2\pi i (m\phi+n\psi)} = 1\}.
$$
Then in $n\#\CP^2$ for each   $1\le \alpha\le n+2$
there exists a distinguished $\U(1)\times \U(1)$-invariant 2-sphere whose stabilizer is 
exactly $G(m_{\alpha}, n_{\alpha})$.
Let $S^2_{\alpha}$ be this 2-sphere.
(The union of all these spheres are exactly the complement of $\mathcal H^2 \times 
\U(1)\times \U(1)$ in $n\#\CP^2$.)
It is elementary to see from \eqref{stab1} that if $G(m,n)\subset \U(1)\times \U(1)$ is a $\U(1)$-subgroup 
which acts semi-freely on the first sphere $S^2_1$, then 
$(m,n) = (0,1)$ or otherwise $m=1$, up to simultaneous inversion of the sign.
Similarly, if $G(m,n)\subset \U(1)\times \U(1)$ is a $\U(1)$-subgroup 
which acts semi-freely on the last sphere $S^2_k$, then 
$(m,n) = (1,0)$ or otherwise $n=1$, up to simultaneous inversion of the sign.
Taking intersection of these,
any $\U(1)$-subgroup acting semi-freely on $n\#\CP^2$ has to be of 
the form $G(1,1), G(1,0)$ or $G(0,1)$.
But again it is elementary to see that
the subgroup $G(1,1)$ cannot act semi-freely on $S^2_{\alpha}$,
$1<\alpha<k$.
Therefore the two subgroups $G(1,0)$ and $G(0,1)$ are all subgroups
that can act semi-freely on $n\#\CP^2$.
If $G(1,0)$ (resp.\,$G(0,1)$) acts semi-freely on $S^2_{\alpha}$ ($1<\alpha<k$), it follows that 
$n_{\alpha} = 1$ (resp.\,$m_{\alpha}=1$).
Thus the stabilizer data must be
\begin{align}\label{stab2}
(m_1,n_1)= (0,1), \, (m_k,n_k) = (1,0),\,\,
(m_{\alpha},n_{\alpha}) = (m_{\alpha},1)
{\text{ for }} 1< \alpha< k,
\end{align}
for some $m_{\alpha}>0$, or 
\begin{align}\label{stab3}
(m_1,n_1)= (0,1), \, (m_k,n_k) = (1,0),\,\,
(m_{\alpha},n_{\alpha}) = (1,n_{\alpha})
{\text{ for }} 1< \alpha< k,
\end{align}
for some $n_{\alpha}>0$.
But of course these represent  the same $\U(1)\times U(1)$-action on 
$n\#\CP^2$, so we dispose of the former.
Then the final condition in \eqref{stab1} means $m_{\alpha} = k-\alpha$,
and we are done.
\end{proof}

Next, by the usual $\rm{PSL}(2,\mathbb R)$-action, we may suppose that
$\mathfrak q_{1}=\infty$ and $\mathfrak q_{2}=0$. 
From these normalizations, we compute
\begin{align}
 \phi &= \frac{u^{(\mathfrak q_1)}-u^{(\mathfrak q_2)}}{2} \otimes (0,1) + \sum _{{\alpha}=2} ^{k-1} \frac{u^{(\mathfrak q_{\alpha})}-u^{(\mathfrak q_{{\alpha}+1})}}{2} \otimes (1,n+2-{\alpha}) + \frac{u^{(\mathfrak q_{k})}+u^{(\mathfrak q_1)}}{2} \otimes (1,0)
\notag\\
&=
\frac12 \left( u^{(\mathfrak q_1)} + u^{(\mathfrak q_2)} \,,\,
u^{(\mathfrak q_1)} + (k-3) u^{(\mathfrak q_2)} - \sum_{{\alpha}=3}^k u^{(\mathfrak q_{\alpha})}  \right)
\notag\\
&= \frac12 
\begin{pmatrix}
\displaystyle\frac{x_2}R & (k-3)\displaystyle\frac{x_2}R-\displaystyle\sum_{{\alpha}=3}^k\frac{x_2}{\rho_{\alpha}}
\\ \displaystyle\frac{x_1}R-1 & \ \ \ (k-3)\displaystyle\frac{x_1}R-\displaystyle\sum_{{\alpha}=3}^k\frac{x_1-\mathfrak q_{\alpha}}{\rho_{\alpha}}-1
\end{pmatrix}
\left(=
\begin{pmatrix}a_1 & b_1 \\ a_2 & b_2
\end{pmatrix} \right).\label{JLB}
\end{align}
Substituting these into \eqref{Joyce1}, we obtain the explicit form of Joyce 
metrics which admit a semi-free $\U(1)$-action.

We next have the following
\begin{theorem}\label{thm:metrics}
With respect to the above coordinates and the identification $q_{\alpha}=\mathfrak q_{\alpha}$ so that $r_{\alpha}=\rho_{\alpha}$ $(3\le {\alpha}\le n+2)$,
the toric LeBrun metric $\tilde g_{\LB}$ (defined in \eqref{tildeLBmetric}) and the Joyce metric $g_J$ (defined in \eqref{Joyce1} with \eqref{JLB}) are isometric under the map
\begin{align}
(x_1,x_2,\theta,\tau)\longmapsto
(x_1,x_2,y_1,y_2)
=\left(x_1,x_2,\theta,\tau\right).
\end{align}
\end{theorem}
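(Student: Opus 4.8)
The plan is to verify by a direct computation, in the common coordinate patch $\{x_2>0\}\times\U(1)\times\U(1)$, that the two metric tensors coincide. Both $\tilde g_{\LB}$ (the bracketed quantity in \eqref{LBmetric3}) and $g_J$ (formula \eqref{Joyce1} with $\phi$ as in \eqref{JLB}) have the shape $g_{\mathcal H^2}$ plus a quadratic form in the two circle variables, and the $g_{\mathcal H^2}$ summands match tautologically since the map is the identity in $(x_1,x_2)$. So the statement reduces to showing that, after the substitution $y_1=\theta$, $y_2=\tau$, the two fiber quadratic forms agree. I would record each as a symmetric $2\times2$ matrix in the basis $(d\theta,d\tau)$: expanding the bracket in \eqref{LBmetric3} gives
\[
\frac1{x_2^2}
\begin{pmatrix}
\dfrac{2R(R-x_1)}{V^2} & \dfrac{2R(R-x_1)f}{V^2}\\[3mm]
\dfrac{2R(R-x_1)f}{V^2} & 2R(R+x_1)+\dfrac{2R(R-x_1)f^2}{V^2}
\end{pmatrix},
\]
while, writing $\phi=\left(\begin{smallmatrix}a_1&b_1\\a_2&b_2\end{smallmatrix}\right)$ as in \eqref{JLB}, the fiber form of $g_J$ is $(a_1b_2-a_2b_1)^{-2}$ times the symmetric matrix $M$ with diagonal entries $a_1^2+a_2^2$ and $b_1^2+b_2^2$ and off-diagonal entry $-(a_1b_1+a_2b_2)$.

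The heart of the argument is then a few algebraic identities for the entries displayed in \eqref{JLB}. Using only $R^2=x_1^2+x_2^2$ one finds $a_1^2+a_2^2=(R-x_1)/(2R)$. Substituting the formulas \eqref{f_and_V} for $V$ and $f$ and collapsing the resulting finite sums, one obtains the two identities in which the LeBrun potentials resurface on the Joyce side,
\[
a_1b_2-a_2b_1=-\frac{x_2\,V}{2R},\qquad
a_1b_1+a_2b_2=-\frac{(R-x_1)\,f}{2R}.
\]
Together with the first identity these immediately show that the $(1,1)$ and $(1,2)$ entries of $(a_1b_2-a_2b_1)^{-2}M$ equal those of the LeBrun matrix above. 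For the $(2,2)$ entry I would \emph{not} compute $b_1^2+b_2^2$ directly, but instead invoke the Lagrange identity $(a_1^2+a_2^2)(b_1^2+b_2^2)-(a_1b_1+a_2b_2)^2=(a_1b_2-a_2b_1)^2$; this shows at once that the Joyce fiber matrix $(a_1b_2-a_2b_1)^{-2}M$ has determinant $(a_1b_2-a_2b_1)^{-2}=4R^2/(x_2^2V^2)$, and a one-line computation (in which the $f^2$-terms cancel) gives the same value for the determinant of the LeBrun matrix. Since two symmetric $2\times2$ matrices with equal $(1,1)$ entry (which is nonzero here), equal $(1,2)$ entry, and equal determinant necessarily have equal $(2,2)$ entry, the fiber forms — hence $\tilde g_{\LB}$ and the pullback of $g_J$ — coincide.

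The step I expect to be the real obstacle is proving the two boxed identities, i.e., seeing $V$ and $f$ literally emerge from $\phi$. This is precisely where every normalization is used: the $\mathrm{PSL}(2,\RR)$ choice $\mathfrak q_1=\infty$, $\mathfrak q_2=0$; the semi-free stabilizer data $(m_\alpha,n_\alpha)=(1,k-\alpha)$ from \eqref{stab0}; the collapse of the sum defining $\phi$ to the closed form \eqref{JLB}; and the identification $q_\alpha=\mathfrak q_\alpha$, hence $r_\alpha=\rho_\alpha$, which is forced by the coordinate change $x_1=r^2-z^2$, $x_2=2rz$ that carries the $n$ monopole points on the $z$-axis exactly onto the points $(q_\alpha,0)$, $3\le\alpha\le n+2$. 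Carrying out the identities then amounts to bookkeeping the telescoping of $\sum_\alpha(R-q_\alpha)/r_\alpha$ and $\sum_\alpha(R+q_\alpha)/r_\alpha$ against the constants $1-\tfrac n2$ and $-\tfrac n2$ in \eqref{f_and_V}; everything else is routine. Finally, since these displayed formulas express $\tilde g_{\LB}$ and $g_J$ on a dense open subset of $n\#\CP^2$ on which both induce the same smooth structure, the equality of tensors is exactly the asserted isometry, and, together with \eqref{tildeLBmetric}, it furnishes the explicit conformal equivalence of Theorem~\ref{L=J}.
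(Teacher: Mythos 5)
Your proposal is correct, and for the first two fiber components it follows the paper's own route exactly: the same reduction to matching a $2\times 2$ quadratic form in $(d\theta,d\tau)$ versus $(dy_1,dy_2)$, the same key identity $a_1b_2-a_2b_1=-\tfrac{x_2}{2R}V$ (the paper's Lemma~\ref{lemma:proportional}), and the same computation $a_1b_1+a_2b_2=-\tfrac{(R-x_1)f}{2R}$, which is exactly what the paper establishes when it verifies $g_{12}=\tilde g_{13}$. Where you genuinely diverge is the $(2,2)$ entry. The paper proves $g_{22}=\tilde g_{33}$ head-on: it expands $V^2+f^2$ and $V^2-f^2$ into double sums over pairs of monopole points, separately expands $4(b_1^2+b_2^2)$, and matches the two expressions term by term --- by far the longest computation in the paper. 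You instead observe that the Lagrange identity forces the Joyce fiber matrix to have determinant $(a_1b_2-a_2b_1)^{-2}=4R^2/(x_2^2V^2)$, that the LeBrun fiber matrix has the same determinant because its $f^2$-terms cancel and $R^2-x_1^2=x_2^2$, and that equal $(1,1)$ entry (nonzero, since $R>x_1$ and $V>0$ on $\{x_2>0\}$), equal $(1,2)$ entry, and equal determinant pin down the $(2,2)$ entry. This is a genuine simplification: it eliminates the paper's computation of $b_1^2+b_2^2$ entirely and is far less error-prone, at the cost of not producing the explicit closed form for $\tilde g_{33}$ that the paper records. The remaining burden in your write-up --- actually deriving the two displayed identities from \eqref{JLB} and \eqref{f_and_V} --- is the same telescoping bookkeeping the paper carries out, and both identities are true, so there is no gap.
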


For the proof, we begin with the following

\begin{lemma}\label{lemma:proportional}
As functions on $\mathcal H^2=\{(x_1,x_2)\set x_2>0\}$,
we have the following relationship
\begin{align}\label{proport}
a_1b_2-a_2b_1=-\frac{x_2}{2R}V.
\end{align}
\end{lemma}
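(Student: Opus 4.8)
The plan is to prove \eqref{proport} by a direct substitution of the explicit entries of $\phi$ recorded in \eqref{JLB} into the determinant $a_1 b_2 - a_2 b_1$, followed by an elementary simplification using the formula for $V$ in \eqref{f_and_V}. Setting $k = n+2$ (so that the constant $k-3$ appearing in \eqref{JLB} equals $n-1$) and using the identification $\mathfrak q_\alpha = q_\alpha$, $\rho_\alpha = r_\alpha$, one reads off from \eqref{JLB} that $2a_1 = x_2/R$, $2a_2 = x_1/R - 1$, $2b_1 = (n-1)x_2/R - \sum_{\alpha=3}^{n+2} x_2/r_\alpha$, and $2b_2 = (n-1)x_1/R - \sum_{\alpha=3}^{n+2}(x_1-q_\alpha)/r_\alpha - 1$.

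First I would expand $4(a_1 b_2 - a_2 b_1)$ and split the result into the terms not involving the sums and the terms linear in the sums. In the first group the contributions quadratic in $1/R$ cancel, leaving $(n-2)\,x_2/R$. In the second group the essential point is the identity
\begin{align*}
-\,\frac{x_2}{R}\cdot\frac{x_1-q_\alpha}{r_\alpha} \;+\; \frac{x_1}{R}\cdot\frac{x_2}{r_\alpha} \;=\; \frac{x_2}{R}\cdot\frac{q_\alpha}{r_\alpha},
\end{align*}
which, combined with the remaining $-\sum x_2/r_\alpha$, shows that this group contributes $-\,\tfrac{x_2}{R}\sum_{\alpha=3}^{n+2}(R-q_\alpha)/r_\alpha$. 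Adding the two groups gives
\begin{align*}
4(a_1 b_2 - a_2 b_1) = -\,\frac{x_2}{R}\Bigl( 2 - n + \sum_{\alpha=3}^{n+2}\frac{R-q_\alpha}{r_\alpha} \Bigr),
\end{align*}
and since the formula for $V$ in \eqref{f_and_V} says exactly that the parenthesis equals $2V$, dividing by $4$ yields \eqref{proport}.

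This argument is a routine computation and presents no real obstacle; the only care needed is in correctly tracking the index range $3\le\alpha\le n+2$ and the value of the constant $k-3=n-1$, and in spotting the cancellation $-(x_1-q_\alpha)+x_1=q_\alpha$ that collapses the sum into precisely the combination occurring in $V$.
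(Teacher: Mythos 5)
Your computation is correct and follows essentially the same route as the paper's proof: direct expansion of the determinant $a_1b_2-a_2b_1$ from \eqref{JLB}, cancellation of the terms quadratic in $1/R$, and the collapse $-(x_1-q_\alpha)+x_1=q_\alpha$ producing exactly $-\frac{x_2}{R}\bigl(2-n+\sum_{\alpha}\frac{R-q_\alpha}{r_\alpha}\bigr)/1 = -\frac{2x_2}{R}V$. The only difference is that you spell out the intermediate cancellations more explicitly than the paper does.
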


\begin{remark}{\em
The negativity of $a_1b_2-a_2b_1$ seemingly contradicts Lemma 3.3.3 
in \cite{Joyce1995}, but this is not a problem, since the sign 
comes from the difference of  the orientation on the right half plane 
used in Joyce's paper and that on the upper half plane used in this paper. }
\end{remark}
\proof
By \eqref{JLB}, we have
\begin{align*}
4 ( a_1 b_2 - a_2 b_1 ) & =  \frac {x_2} R
\left\{
(k-3)\displaystyle\frac{x_1}R-\displaystyle\sum_{{\alpha}=3}^k\frac{x_1-\mathfrak q_{\alpha}}{\rho_{\alpha}}-1\right\}
- \big(\frac{x_1} R-1\big)
\left\{
(k-3)\displaystyle\frac{x_2}R-\displaystyle\sum_{{\alpha}=3}^k\frac{x_2}{\rho_{\alpha}}
\right\},
\end{align*}
and after several cancellations, this equals
$$
\frac{x_2}{R}\left\{
k-4-\sum_{{\alpha}=3}^{k}\frac{R-\mathfrak q_{\alpha}}{\rho_{\alpha}}\right\}
$$
which is exactly $-(2x_2V)/R$, under our assumption $q_{\alpha}=\mathfrak q_{\alpha}$.
Dividing by $4$ gives the claim of the lemma.
\proofend

\vspace{2mm}\noindent
{\em Proof of Theorem \ref{thm:metrics}.}
We write the two metrics as 
$$
\tilde g_{\LB} = g_{\mathcal H^2} + \tilde g_{11} d\theta^2 + 2 \tilde g_{13} d\theta d\tau + \tilde g_{33} d\tau^2,
$$
and
$$
g_J = g_{\mathcal H^2} +  g_{11} dy_1^2 + 2  g_{12} dy_1 dy_2 +  g_{22} dy_2^2.
$$
Then the claim of Theorem \ref{thm:metrics} is equivalent to the three identities
\begin{align}
g_{11}=\tilde g_{11},\quad
g_{12}=\tilde g_{13},\quad
g_{22}=\tilde g_{33}.
\end{align}
In the following, for simplicity of notation, we write
$$
\sum_{{\alpha}=3}^{n+2}=:\sum{}'\quad {\text{and}}\quad
\sum_{3\le \alpha<\beta\le n+2}=:\sum{}''
$$

First, we readily have
\begin{align}
\tilde g_{11} = \frac{2R^2}{x_2^2}
\frac{1-\frac{x_1}R}{V^2},
\end{align}
and also by using Lemma \ref{lemma:proportional}
\begin{align}
g_{11} &= \frac{a_1^2+a_2^2}{(a_1b_2-a_2b_1)^2} 
 = \frac{ \frac12 \left( 1 - \frac{x_1}R \right)}
{(\frac{x_2}{2R}V)^2}
= \frac{2R^2}{x_2^2}
\frac{1-\frac{x_1}R}{V^2}.
\end{align}
Therefore we obtain $g_{11}=\tilde g_{11}$.

Second, from \eqref{LBmetric3}, we have
\begin{align}\label{LBmetric5}
\tilde g_{13} = \frac{2R^2}{x_2^2}
\left(
1-\frac{x_1}{R}
\right)
\frac{f}{V^2}
= \frac{2R^2}{x_2^2}
\left(
1-\frac{x_1}{R}
\right)
\frac{\displaystyle -\frac n2 + \frac 12 \sum{}'
\frac{R+q_{\alpha}}{r_{\alpha}}}{V^2}.
\end{align}
On the other hand, from \eqref{JLB} we can compute,
by using the relation $x_1^2+x_2^2=R^2$ twice, 
\begin{multline}
4 ( a_1b_1 + a_2b_2 ) \\=
\frac{x_2}R
\left\{
(n-1)\frac{x_2}R-\sum{}'\frac{x_2}{\rho_{\alpha}}
\right\}
+  \left(\frac{x_1}{R}-1\right)
\left\{
(n-1)\frac{x_1}R-\sum{}'
\frac{x_1-\mathfrak q_{\alpha}}{\rho_{\alpha}}
-1\right\}\\
=
\left(  n - \sum{}' \frac{R+\mathfrak q_{\alpha}}{\rho_{\alpha}} \right)
\left( 1- \frac{x_1}R \right).
\end{multline}
Hence again by using Lemma \ref{lemma:proportional} we obtain
\begin{align}\label{Jmetric2}
g_{12} = 
-\frac{a_1b_1+a_2b_2}{(a_1b_2-a_2b_1)^2}
=
-
\frac{\frac14\left(  n - \sum{}' \frac{R+\mathfrak q_{\alpha}}{\rho_{\alpha}} \right)
\left( 1- \frac{x_1}R \right)}
{\displaystyle \left(\frac{x_2}{2R}V \right)^2}.
\end{align}
By comparing \eqref{LBmetric5} and \eqref{Jmetric2}, we obtain $g_{12}=\tilde g_{13}$.

Finally, for the remaining coefficients $\tilde g_{33}$ and $g_{22}$, we have, by \eqref{LBmetric3}, 
\begin{align}
\tilde g_{33} &= 
\frac{2R^2}{x_2^2}
\left\{
\left(
1 + \frac{x_1}{R} 
\right)
+
\left(
1 - \frac{x_1}{R} 
\right)
\frac{f^2}{V^2}
\right\} =
\frac{2R^2}{x_2^2V^2}
\left\{
(V^2+f^2) + \frac{x_1}R (V^2-f^2) 
\right\}.
\end{align}
Further by \eqref{f_and_V} we compute 
\begin{multline}
V^2+f^2 = 
1 - n + \frac{n^2}2 \\
+ \sum{}'
\frac{R-q_{\alpha}}{r_{\alpha}} - n \sum{}' \frac{R}{r_{\alpha}}
+ \frac12
\left(
\sum{}' \frac{R^2 + q_{\alpha}^2}{r_{\alpha}^2}
+ 2 \sum{}'' \frac{R^2 + q_{\alpha}q_{\beta}}{r_{\alpha}r_{\beta}}
\right),
\end{multline}
and
\begin{align}
V^2 - f^2 &= 
1 - n  + \sum{}'
\frac{R-q_{\alpha}}{r_{\alpha}} + n \sum{}' \frac{q_{\alpha}}{r_{\alpha}}
- R
\left(
\sum{}' \frac{q_{\alpha}}{r_{\alpha}^2}
+  \sum{}'' \frac{ q_{\alpha} + q_{\beta} }{r_{\alpha} r_{\beta}}
\right).
\end{align}
From these we obtain
\begin{multline}
\tilde g_{33} = \frac{2R^2}{x_2^2V^2}
\Big\{ \frac12\sum{}' \frac{R^2+q_{\alpha}^2-2x_1q_{\alpha}}{r_{\alpha}^2} + \sum{}'' \frac{R^2+q_{\alpha}q_{\beta}-x_1(q_{\alpha}+q_{\beta})}{r_{\alpha}r_{\beta}}\\
+ \sum{}' \frac{(1-n)R-q_{\alpha}}{r_{\alpha}}
+ \frac{x_1}R \sum{}' \frac{R+(n-1)q_{\alpha}}{r_{\alpha}}
+ (1-n)\frac{x_1}R + 1 - n + \frac{n^2}2\Big\}.
\end{multline}
Noting the relation $R^2+q_{\alpha}^2-2x_1q_{\alpha}=r_{\alpha}^2$,
the first summation becomes just $n$, so this equals
\begin{multline}\label{g33}
 \frac{2R^2}{x_2^2V^2}
\Big\{ \sum{}'' \frac{R^2+q_{\alpha}q_{\beta}-x_1(q_{\alpha}+q_{\beta})}{r_{\alpha} r_{\beta}}
+ \sum{}' \frac{(1-n)R-q_{\alpha}}{r_{\alpha}}
+ \frac{x_1}R \sum{}' \frac{R+(n-1)q_{\alpha}}{r_{\alpha}}\\
+ (1-n)\frac{x_1}R + 1 - \frac n2 + \frac{n^2}2
\Big\}.
\end{multline}
On the other hand, by \eqref{JLB}, we can compute
\begin{multline}
4(b_1^2+b_2^2)=(n-1)^2-2(n-1)
\left(
\frac{x^2_2} R \sum{}'  \frac{1}{\rho_{\alpha}}
+  \frac{x_1} R \sum{}' \frac{x_1-\mathfrak q_{\alpha}}{\rho_{\alpha}}
+ \frac {x_1}{R}
\right)  \\
+  x_2^2\left(\sum{}'\frac1{\rho_{\alpha}}\right)^2
+  \left(\sum{}'\frac{x_1-\mathfrak q_{\alpha}}{\rho_{\alpha}}\right)^2
+  2\sum{}' \frac{x_1-\mathfrak q_{\alpha}}{\rho_{\alpha}} +1 \\
= (n-1)^2 - 2(n-1) \left\{
\frac1R \sum{}' \frac{R^2-x_1\mathfrak q_{\alpha}}{\rho_{\alpha}} + \frac{x_1}R
\right\}
+ \sum{}' \frac{x_2^2+(x_1-\mathfrak q_{\alpha})^2}{\rho_{\alpha}^2} \\
+ 2 \sum{}'' \frac{x_2^2 + (x_1-\mathfrak q_{\alpha})(x_1 - \mathfrak q_{\beta})}{\rho_{\alpha} \rho_{\beta}}
+ 2 \sum{}' \frac{x_1-\mathfrak q_{\alpha}}{\rho_{\alpha}} + 1\\
= 2 \sum{}'' \frac{R^2+\mathfrak q_{\alpha}\mathfrak q_{\beta}-x_1(\mathfrak q_{\alpha}+\mathfrak q_{\beta})}{\rho_{\alpha} \rho_{\beta}}
+ \frac{2x_1}R \sum{}' \frac{R+(n-1)\mathfrak q_{\alpha}}{\rho_{\alpha}}\\
+ 2 \sum{}' \frac{(1-n)R-\mathfrak q_{\alpha}}{\rho_{\alpha}}
+ 2(1-n)\frac{x_1}R + n^2 - n + 2.
\end{multline}
Under the identification $q_{\alpha}=\mathfrak q_{\alpha}$,
 this is exactly twice the quantity in the braces $\{\quad\}$ in \eqref{g33}.
Consequently,
\begin{align}\label{aaaa}
\tilde g_{33} = \frac{2R^2}{x_2^2V^2}\cdot 2 (b_1^2+b_2^2)
= \frac{4R^2}{x_2^2V^2}(b_1^2+b_2^2).
\end{align}
On the other hand by \eqref{Joyce1} 
and Lemma \ref{lemma:proportional} we have
$$g_{22} = \frac{b_1^2+b_2^2}{(a_1b_2-a_2b_1)^2}
= \frac{b_1^2+b_2^2}{
\left(\frac{x_2}{2R}V \right)^2
}.$$
Hence from \eqref{aaaa} we obtain 
\begin{align}
\tilde g_{33} = \frac{4R^2}{x_2^2V^2} 
\left(\frac{x_2}{2R}V \right)^2 g_{22} ={g_{22}},
\end{align}
as required, which 
completes the proof of Theorem \ref{thm:metrics}.
\proofend

%\bibliography{HV_metric}

\vspace{10mm}

\end{document}